\newcommand{\kk}{\Bbbk}
\newcommand{\kvg}{{\kk[V]^{G}}}
\newcommand{\NGV}{{\mathcal{N}_{G,V}}}
\def\SL{\operatorname{SL}}
\def\SL2{\operatorname{SL}_{2}(K)}
\def\GL2{\operatorname{GL}_{2}(K)}
\def\INVSL2{$K[V]^{operatorname{SL}_{2}(K)}$}
\def\INVSO2{$K[V]^{operatorname{SO}_{2}(K)}$}
\def\INVGL2{$K[V]^{operatorname{GL}_{2}(K)}$}
\def\GL{\operatorname{GL}}
\def\SL{\operatorname{SL}}
\def\chr{\operatorname{char}}
\def\Z{\mathbb{Z}}
\def\N{\mathbb{N}}
\newtheorem{Lemma}{Lemma}[section]
\newtheorem{Theorem}[Lemma]{Theorem}
\newtheorem{Corollary}[Lemma]{Corollary}
\newtheorem*{Corollary of Conjecture}{Corollary of Conjecture}
\theoremstyle{definition}
\theoremstyle{remark}
  \newtheorem{rem}[Lemma]{Remark}
\newtheorem{eg}[Lemma]{Example}
\newtheoremstyle{Acknowledgments}% name
  {}% {\topsep}%      Space above
    {}% {\topsep}%      Space below
     {}%         Body font
     {}%         Indent amount (empty = no indent, \parindent = para indent)
    {\bfseries}% Thm head font
    {}%        Punctuation after thm head
     {.5em}%     Space after thm head: " " = normal interword space;
\theoremstyle{Acknowledgments}
\newtheorem{ack}{Acknowledgments.}
\title[Separating fixed points from zero]
{On separating a fixed point from zero by invariants}
\author{Jonathan Elmer}
\address{University of Aberdeen\\
King's College, Aberdeen\\
AB24 3UE}
\email{j.elmer@abdn.ac.uk}
\author{Martin Kohls}
\address{Technische Universit\"at M\"unchen \\
 Zentrum Mathematik-M11\\
Boltzmannstrasse 3\\
 85748 Garching, Germany}
\email{kohls@ma.tum.de}
\date{\today}
\subjclass[2010]{13A50}
\keywords{Invariant theory, linear algebraic groups, geometrically reductive, prime characteristic}
\begin{document}
\maketitle

\begin{abstract}
Assume a fixed point $v \in V^{G}$ can be separated from zero by a homogeneous invariant $f\in\kk[V]^{G}$ of degree $p^{r}d$ where $p>0$ is the characteristic of the
ground field $\kk$ and $p,d$ are coprime. We show that then $v$ can also be
separated from zero by an invariant of degree $p^{r}$, which we obtain
explicitly from $f$. It follows that the minimal degree of a homogeneous
invariant separating $v$ from zero is a $p$-power.
\end{abstract}

\section{Introduction}\label{SecIntro}

Let $G$ be a linear algebraic group over an infinite field $\kk$ of any characteristic and let $X$ be an algebraic variety over $\kk$ on which $G$ acts.  Then $G$
acts naturally on the ring of functions $\kk[X]$ by $g(f):=f\circ g^{-1}$
for $f\in\kk[X]$ and $g\in G$. The ring of fixed points of this action is
denoted by $\kk[X]^G$ and we call this the ring of invariants. If $G$ acts
linearly and rationally on a finite dimensional $\kk$-vector space $V$ then we call $V$ a $G$-module, and $\kk[V]$ is the set of polynomial functions $V \rightarrow \kk$. In that case we have a natural grading $\kk[V]=\oplus_{d=0}^{\infty}\kk[V]_{d}$
by total degree which is preserved by the action of $G$, and we have
$\kk[V]=S(V^*)$, the symmetric algebra of the dual of $V$.
Determining whether the ring of invariants $\kk[X]^G$ is finitely generated is
one of the oldest and most difficult problems in invariant theory. Hilbert was
able to prove finite generation in the case where $G=\SL_n$ or $\GL_n$ and
$\kk$ a field of characteristic zero. Hilbert's argument can be extended to
any group with the following property: for every $G$-module $V$, and every nonzero fixed point $v \in V^G$ there exists an invariant linear function $f \in (V^*)^G$ such that $f(v) \neq 0$. Such groups are called \emph{linearly reductive}. Linear reductivity of $G$ is equivalent to the condition that all representations of $G$ over $\kk$ are completely reducible.
Nagata made a major breakthrough by considering a more general class of
groups. We say that $G$ is \emph{geometrically reductive} if the following
property holds: for every $G$-module $V$ and every nonzero fixed point $v \in V^G$
there exists a homogeneous invariant function $f \in \kk[V]^G$ of positive degree such
that $f(v) \neq 0$. Nagata~\cite{NagataAffine} was able to prove that if $G$ is geometrically reductive then $\kk[X]^G$ is finitely generated for all $X$. Nagata and Miyata \cite{NagataMiyata} subsequently showed that a geometrically reductive group must be reductive, a purely group-theoretic condition on $G$. It was conjectured by Mumford \cite{Mumford} that all reductive groups are geometrically reductive, a fact finally proved by Haboush \cite{Haboush} several years later.
Now let $G$ be a linear algebraic group over $\kk$ and let $V$ be a $G$-module. Following~\cite{ElmerKohlsSigmaDeltaInfinite} we define for any $v \in V$
\begin{equation*}
\epsilon(G,v): = \inf\{d\in\N_{>0}\mid
  \text{ there exists } f \in \kk[V]_d^G \text{ such that } f(v) \neq 0\},
\end{equation*}
where the infimum of an empty set is infinity. Thus, $G$ is reductive if $\epsilon(G,v)$ is finite for all nonzero $v \in V^G$ and linearly reductive if $\epsilon(G,v) = 1$ for all nonzero $v \in V^G$.
Nagata and Miyata \cite[Proof of Theorem 1]{NagataMiyata}  also proved that if $v \in V^G$ and there exists $f \in \kk[V]^G_d$ such that $f(v) \neq 0$ with $d$ invertible in $\kk$, then there exists $\tilde{f} \in \kk[V]^G_1$ such that $f(v) \neq 0$. Consequently for any nonzero $v \in V^G$, $\epsilon(G,v)$ is equal to one, divisible by $p=\chr(\kk)$ or infinite.  In particular, if $\chr(\kk) = 0$ then every (geometrically) reductive group over $\kk$ is linearly reductive. A version of their argument rephrased in language consistent with this note can be found in \cite[Proposition 2.1]{ElmerKohlsSigmaDeltaInfinite}.  The main purpose of this article is to prove the following result generalising the above in the case of positive characteristic:

\begin{Theorem}\label{MainTheorem}
Let $p = \chr(\kk) \ge 0$, $r\ge 0$ an integer and $d\ge 1$ an integer invertible in $\kk$. Let $v \in V^{G}\setminus\{0\}$ be a nonzero fixed point and suppose there exists a homogeneous invariant $f$ of degree $p^{r}d$ such that $f(v) \neq 0$. Then there exists
a homogeneous invariant $\tilde{f}$ of
degree $p^{r}$ such that $\tilde{f}(v) \neq 0$. In particular, for any $v \in
V^{G}$ we have that $\epsilon(G,v)$ is either a power of $p$ (including
$p^{0}=1$) or $\infty$.
\end{Theorem}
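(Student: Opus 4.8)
The plan is to adapt the translation argument of Nagata and Miyata, extracting from $f$ a single well-chosen homogeneous component. We may assume $p=\chr(\kk)>0$; for $\chr(\kk)=0$ the statement reduces (with $r=0$) to the classical result \cite[Proposition 2.1]{ElmerKohlsSigmaDeltaInfinite}. The first step is to translate: define $h\in\kk[V]$ by $h(x):=f(v+x)$, a polynomial of degree at most $p^rd$. Because $v$ is $G$-fixed, for every $g\in G$ we have $(g\cdot h)(x)=h(g^{-1}x)=f(v+g^{-1}x)=f\bigl(g^{-1}(v+x)\bigr)=(g\cdot f)(v+x)=f(v+x)=h(x)$, so $h\in\kk[V]^G$. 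Since the $G$-action preserves the grading, each homogeneous component $h_i$ of $h=\sum_{i=0}^{p^rd}h_i$ is itself $G$-invariant of degree $i$.

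Next I would evaluate these components at $v$ by restricting $h$ to the line $\kk v$. Using homogeneity of $f$ and of each $h_i$, in $\kk[t]$ we get $\sum_{i}h_i(v)\,t^i=h(tv)=f\bigl((1+t)v\bigr)=(1+t)^{p^rd}f(v)$, hence $h_i(v)=\binom{p^rd}{i}f(v)$ for all $i$. The natural candidate is then $\tilde f:=h_{p^r}$, a homogeneous invariant of degree $p^r$ with $\tilde f(v)=\binom{p^rd}{p^r}f(v)$; it remains only to see that $\binom{p^rd}{p^r}$ is nonzero in $\kk$. This is the Frobenius congruence $(1+t)^{p^r}\equiv 1+t^{p^r}\pmod p$: raising to the $d$-th power and comparing the coefficient of $t^{p^r}$ gives $\binom{p^rd}{p^r}\equiv\binom{d}{1}=d\pmod p$, which is nonzero in $\kk$ because $d$ is invertible. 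Thus $\tilde f(v)\neq0$, which proves the first statement, and $\tilde f$ is visibly obtained from $f$ explicitly, by translation by $v$ and truncation to degree $p^r$.

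For the last assertion, suppose $\epsilon(G,v)=n<\infty$ and write $n=p^rd$ with $p\nmid d$ (take $r=0$ if $p=0$). There is some $f\in\kk[V]^G_n$ with $f(v)\neq0$; the first part yields a homogeneous invariant of degree $p^r$ nonzero at $v$, so $\epsilon(G,v)\le p^r\le p^rd=\epsilon(G,v)$, forcing $\epsilon(G,v)=p^r$, a power of $p$.

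The argument involves essentially no calculation; its content lies in the two elementary observations that translating an invariant by a $G$-fixed point yields an invariant, and that $\binom{p^rd}{p^r}$ survives reduction mod $p$. I expect the only point requiring care is exactly this non-vanishing — equivalently, checking that $p^r$ is an admissible target degree, i.e.\ that we do not accidentally pick a component $h_i$ that vanishes at $v$ — and the Frobenius identity settles it cleanly. Note that geometric reductivity of $G$ is never invoked.
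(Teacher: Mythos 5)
Your proof is correct, and it takes a genuinely different and noticeably shorter route than the paper's. The paper fixes a basis extending $v$, writes down $\tilde f$ by an explicit coordinate formula, and then proves its invariance by constructing a $G$-equivariant map $\kk[V]_{p^rd}/T\to\kk[V]_{p^r}$ and verifying equivariance on a spanning set, which requires the Lucas-type congruence $\binom{p^{r}d-k}{j}\equiv\binom{p^{r}-k}{j}\pmod p$. You instead observe that translation by the fixed point, $h(x)=f(v+x)$, preserves invariance, that homogeneous components of invariants are invariant, and that a single restriction to the line $\kk v$ computes all the values $h_i(v)=\binom{p^{r}d}{i}f(v)$; the only arithmetic input is $\binom{p^{r}d}{p^{r}}\equiv d\pmod p$, which your Frobenius identity settles without Lucas' theorem. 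In fact the two constructions agree up to the unit $d$: in the paper's coordinates your $h_{p^{r}}$ equals $\sum_{i=0}^{p^{r}}\binom{p^{r}d-i}{p^{r}-i}x_0^{p^{r}-i}c_i$, which reduces mod $p$ to $d$ times the paper's $\tilde f$. What the paper's approach buys is the explicit equivariant truncation map, which may be of independent interest; what yours buys is brevity and the elimination of both the coordinate computation and Lucas' theorem. Two minor remarks: your argument works verbatim in characteristic $0$ as well (with $r=0$ one gets $h_1(v)=d\,f(v)\neq0$), so the appeal to the cited proposition is unnecessary; and for the final assertion one should record separately that $\epsilon(G,0)=\infty$, though this is immediate since every homogeneous invariant of positive degree vanishes at $0$.
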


One says that a pair of points $v,w \in V$ can be \emph{separated} if there exists an invariant $f\in\kvg$ such that $f(v)\ne f(w)$. It has become quite popular recently to investigate so called \emph{separating sets}, which are subsets $S$ of the invariant ring with the following property: whenever two points can be separated, then they
can be separated by an element of $S$. This research topic was introduced by Derksen
and Kemper \cite[Definition 2.3.8]{DerksenKemper}, and quite a number of papers have appeared which deal with this topic. Remarkably, it turns out that even if the ring of invariants $\kvg$ is not finitely generated, it still contains a finite separating set, see \cite[Theorem~2.3.15]{DerksenKemper}. 
From the point of view of this research topic, we deal with separating a fixed point $v\in
V^{G}$ from the zero point $w=0$.
Recall that, for any $G$ and $V$, \emph{Hilbert's Nullcone} $\NGV$ is defined to be the vanishing set of all homogeneous invariants of positive degree. It is natural to consider the quantity
\begin{equation*}
\delta(G,V): = \sup\left(\{\epsilon(G,v)\mid v \in V^G \setminus \NGV \}\cup\{0\}\right).
\end{equation*}
Since a separating set
must certainly contain an invariant separating a given point outside the
nullcone from zero, \cite[Theorem~2.3.15]{DerksenKemper} implies that
$\delta(G,V)$ is finite for any $G$ and $V$. If $G$ is linearly reductive
then $\delta(G,V) \le 1$ for all $V$. For this reason, the number
$\delta(G,V)$ can be considered as a measure for the \emph{``degree of reductivity''} of the representation $V$. Further results on $\delta(G,V)$ can be found in \cite{ElmerKohlsSigmaDelta}, \cite{ElmerKohlsSigmaDeltaInfinite} and \cite{KohlsSezerDegRed}.  Our main theorem implies immediately

\begin{Corollary}\label{CorMainTheorem}
For any $G$-module $V$, we have that $\delta(G,V)$ is zero, one, or a power of $p = \chr(\kk)$.
\end{Corollary}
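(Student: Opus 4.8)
The plan is to derive Corollary~\ref{CorMainTheorem} from Theorem~\ref{MainTheorem} together with the finiteness of $\delta(G,V)$ already recorded above, so the substantive task is the theorem itself. For the corollary: if $V^{G}\subseteq\NGV$ then the index set in the definition of $\delta$ is empty and $\delta(G,V)=0$; otherwise every $v\in V^{G}\setminus\NGV$ satisfies $\epsilon(G,v)<\infty$ (some homogeneous invariant of positive degree does not vanish at $v$), so Theorem~\ref{MainTheorem} makes each such $\epsilon(G,v)$ a power of $p$ (with $1=p^{0}$), and since $\delta(G,V)<\infty$ by \cite[Theorem~2.3.15]{DerksenKemper} it is the maximum of finitely many $p$-powers, hence itself a power of $p$ (possibly $1$).

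For Theorem~\ref{MainTheorem} I would produce $\tilde f$ from $f$ by a ``translate and truncate'' construction generalising the Nagata--Miyata argument (which is the case $r=0$). Because $v\in V^{G}$, the translation $\tau_{v}\colon V\to V$, $x\mapsto x+v$, is $G$-equivariant: $g(x+v)=gx+gv=gx+v$. Hence the comorphism $\tau_{v}^{*}\colon\kk[V]\to\kk[V]$ given by $\tau_{v}^{*}(h)(x)=h(x+v)$ is $G$-equivariant, so $\tau_{v}^{*}(f)$ is again an invariant, though no longer homogeneous. Decomposing $\tau_{v}^{*}(f)=\sum_{k\ge 0}f^{(k)}$ into homogeneous parts $f^{(k)}\in\kk[V]_{k}$, each $f^{(k)}$ is invariant since the $G$-action is graded. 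I would then take $\tilde f:=f^{(p^{r})}$, a homogeneous invariant of degree $p^{r}$ (note $p^{r}\le p^{r}d=\deg f$); the entire content of the theorem is that $\tilde f(v)\ne 0$.

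To establish this, restrict $\tau_{v}^{*}(f)$ to the line through $v$: for a scalar (or indeterminate) $s$, homogeneity of $f$ gives $\tau_{v}^{*}(f)(sv)=f((s+1)v)=(s+1)^{p^{r}d}f(v)$, while homogeneity of the $f^{(k)}$ gives $\tau_{v}^{*}(f)(sv)=\sum_{k}s^{k}f^{(k)}(v)$; comparing coefficients of $s^{p^{r}}$ (legitimate since $\kk$ is infinite) yields $\tilde f(v)=\binom{p^{r}d}{p^{r}}f(v)$. So everything comes down to the arithmetic fact that $\binom{p^{r}d}{p^{r}}$ is invertible in $\kk$, which is the one genuinely load-bearing step. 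If $p=0$ this is trivial (only $r=0$ is of interest, with $\binom{d}{1}=d$). If $p>0$, write $d=\sum_{j\ge 0}d_{j}p^{j}$ in base $p$; then the base-$p$ digits of $p^{r}d$ are $0,\dots,0,d_{0},d_{1},\dots$ with $d_{0}$ in position $r$, while $p^{r}$ has its only nonzero digit, a $1$, in position $r$, so Lucas's theorem gives $\binom{p^{r}d}{p^{r}}\equiv\binom{d_{0}}{1}=d_{0}\equiv d\pmod p$, which is nonzero precisely because $\gcd(p,d)=1$ (equivalently, no carries occur on adding $p^{r}$ and $p^{r}(d-1)$ in base $p$, and Kummer's theorem applies). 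Thus $\tilde f(v)=\binom{p^{r}d}{p^{r}}f(v)\ne 0$, proving the theorem; the claim about $\epsilon(G,v)$ then follows since, writing $\epsilon(G,v)=p^{r}d<\infty$ with $p\nmid d$, the theorem yields an invariant of degree $p^{r}\le p^{r}d$ nonvanishing at $v$, forcing $d=1$ by minimality. The main difficulty is not really a calculation but a conceptual one: recognising that translate-and-truncate is the right construction; once that is in hand, the binomial coefficient turns out to be a unit exactly under the stated coprimality, by a standard combinatorial identity.
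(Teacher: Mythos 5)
Your derivation of the Corollary from Theorem~\ref{MainTheorem} is exactly the intended one (the paper just says the theorem ``implies immediately''): the value set $\{\epsilon(G,v)\mid v\in V^{G}\setminus\NGV\}$ consists of $p$-powers by the theorem and is bounded above by the finite $\delta(G,V)$, so its supremum is $0$ (empty case) or a $p$-power. What is genuinely different---and, as far as I can check, correct---is your proof of the theorem itself. The paper writes $f=\sum_{i}x_{0}^{p^{r}d-i}c_{i}$ in coordinates adapted to $v$, posits $\tilde f=x_{0}^{p^{r}}+\frac1d\sum_{i=1}^{p^{r}}x_{0}^{p^{r}-i}c_{i}$ by hand, and proves invariance by building an explicit $G$-module isomorphism $\kk[V]_{p^{r}d}/T\to\kk[V]_{p^{r}}$ whose equivariance is verified monomial by monomial, with Lemma~\ref{charpbinomial} needed to match $\binom{p^{r}d-k}{j}$ against $\binom{p^{r}-k}{j}$ for all relevant $k,j$. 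You instead note that the translation $\tau_{v}^{*}$ commutes with the $G$-action because $v$ is fixed, so every homogeneous component of $f(\,\cdot\,+v)$ is automatically invariant, and the only arithmetic input is the single congruence $\binom{p^{r}d}{p^{r}}\equiv d\not\equiv 0\pmod p$. The two constructions in fact yield the same invariant up to the unit $d\,f(v)$: expanding $(x_{0}+1)^{p^{r}d-i}$ shows your $\tilde f$ equals $\sum_{i=0}^{p^{r}}\binom{p^{r}d-i}{p^{r}-i}x_{0}^{p^{r}-i}c_{i}$, and Lemma~\ref{charpbinomial} (with $k=i$, $j=p^{r}-i$) gives $\binom{p^{r}d-i}{p^{r}-i}\equiv 1$ for $1\le i\le p^{r}$. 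Your route buys a conceptual explanation of where $\tilde f$ comes from and a one-line equivariance argument; the paper's buys a closed formula for $\tilde f$ directly in terms of the coefficients $c_{i}$ and a proof that, for $r=0$, literally specialises to Nagata--Miyata. Two minor points worth making explicit: for $p=0$ the symbol $p^{r}$ only makes sense with $r=0$, which you correctly isolate; and the coefficient comparison in $s$ is an identity of polynomials because $\kk$ is infinite (or because one may treat $s$ as an indeterminate), as you note.
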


This article is organised as follows: in section two we prove Theorem \ref{MainTheorem}. In section three we give an example showing how the theorem may be used to compute $\delta(G,V)$ in cases where the ring of invariants is difficult to compute.

\section{Separating fixed points from zero}

Before we prove our main result, we want to reproduce the argument showing
that in positive characteristic $p$, given a reductive group $G$ and a nonzero fixed point $v\in V^{G}$,
there exists an invariant of $p$-power degree separating $v$ from zero. This
is a consequence of the following standard result for reductive
groups.

\begin{Theorem}[{see \cite[Lemma A1.2]{Mumford}}]
Let $G$ be a reductive group over a field of positive characteristic $p$ and
$V,W$ be $G$-modules. If $\phi: \kk[V]\rightarrow\kk[W]$ is a surjective
$G$-equivariant algebra-homomorphism, then for any $f\in\kk[W]^{G}$ there exists an
$r\ge 0$ such that $f^{p^{r}}\in\phi(\kk[V]^{G})$.
\end{Theorem}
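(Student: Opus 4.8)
The plan is to lift $f$ from $\kk[W]$ back to $\kk[V]$, confine the resulting failure of $G$-invariance to $\ker\phi$, and then repair it with an invariant produced by geometric reductivity; the appearance of a $p$-th power (rather than $f$ itself) will come from the characteristic-$p$ form of geometric reductivity, i.e.\ from the fact built into Haboush's theorem \cite{Haboush} that a fixed point can be separated from zero by an invariant of $p$-power degree. For the reduction I would first note that we may assume $f\neq 0$ (else $f=f^{p^{0}}=\phi(0)$), pick $F\in\kk[V]$ with $\phi(F)=f$, and choose a finite-dimensional $G$-stable subspace $M\subseteq\kk[V]$ containing $F$ (e.g.\ $M=\bigoplus_{d\le\deg F}\kk[V]_{d}$, each $\kk[V]_{d}$ being a finite-dimensional $G$-submodule). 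Its image $N:=\phi(M)$ is a finite-dimensional $G$-module containing $f$, and since $f$ is invariant, $\kk f\subseteq N$ is a trivial one-dimensional submodule. With $M_{1}:=(\phi|_{M})^{-1}(\kk f)$ and $M_{0}:=M_{1}\cap\ker\phi$ one obtains, using $F\in M_{1}$ and $f\neq 0$, a short exact sequence of finite-dimensional $G$-modules $0\to M_{0}\to M_{1}\xrightarrow{\psi}\kk f\to 0$ with $\kk f$ trivial.

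Next, dualising $\psi$ gives an injection of $G$-modules $\kk\hookrightarrow M_{1}^{*}$ with image spanned by a nonzero fixed vector $u\in(M_{1}^{*})^{G}$. Geometric reductivity of $G$, in its characteristic-$p$ form, then yields $r\ge 0$ and a homogeneous invariant $\eta\in\kk[M_{1}^{*}]^{G}$ of degree $p^{r}$, i.e.\ $\eta\in S^{p^{r}}(M_{1})^{G}$, with $\eta(u)\neq 0$. Unwinding the identifications, evaluation at $u$ in degree $p^{r}$ is exactly the map $S^{p^{r}}(\psi)\colon S^{p^{r}}(M_{1})\to S^{p^{r}}(\kk f)$, whose target is one-dimensional; rescaling $\eta$ we may assume $S^{p^{r}}(\psi)(\eta)$ is the image of $f^{\otimes p^{r}}$. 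Finally, viewing $\eta\in S^{p^{r}}(\kk[V])^{G}$ via $M_{1}\subseteq\kk[V]$ and applying the $G$-equivariant multiplication map $\mu_{V}\colon S^{p^{r}}(\kk[V])\to\kk[V]$ (and its analogue $\mu_{W}$ for $\kk[W]$) produces $h:=\mu_{V}(\eta)\in\kk[V]^{G}$. Since $\phi$ is an algebra homomorphism it intertwines the multiplication maps, and since $\phi|_{M_{1}}=\psi$ takes values in $\kk f$, we get $\phi(h)=\mu_{W}\!\bigl(S^{p^{r}}(\psi)(\eta)\bigr)=\mu_{W}\!\bigl(f^{\otimes p^{r}}\bigr)=f^{p^{r}}$, so $f^{p^{r}}\in\phi(\kk[V]^{G})$, as required.

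The only genuine difficulty I anticipate is in the middle step: plain geometric reductivity (the separating invariant of \emph{some} positive degree) would only give some $k\ge 1$ with $f^{k}\in\phi(\kk[V]^{G})$, and the set of such $k$, being an additively closed subsemigroup of $\ZZ_{\ge 1}$ that contains a multiple of every positive integer, need not contain any power of $p$ — for instance $n\ZZ_{\ge 1}$ with $p\nmid n$ and $n>1$ has all of these properties. So it is essential to invoke the sharper statement that in characteristic $p$ the separating invariant may be chosen of degree a power of $p$. Everything else is routine bookkeeping; the two points that need a moment's care are that $\kk[V]$ is a union of finite-dimensional $G$-stable subspaces (legitimising the passage to finitely many dimensions) and that a ring homomorphism commutes with the symmetric-power multiplication maps, which is what makes the image $\phi(h)$ come out to be $f^{p^{r}}$.
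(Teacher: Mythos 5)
The paper never proves this statement --- it is quoted as a black box from \cite[Lemma A1.2]{Mumford} and then \emph{used} to deduce that a nonzero fixed point can be separated from zero by an invariant of $p$-power degree --- so there is no in-paper proof to compare against. Your argument is correct and is essentially the standard proof of Mumford's lemma: lifting $f$ to $F$, cutting down to the finite-dimensional $G$-module $M_1=(\phi|_M)^{-1}(\kk f)$ with its exact sequence $0\to M_0\to M_1\xrightarrow{\psi}\kk f\to 0$, dualising to obtain a nonzero fixed vector $u\in (M_1^*)^G$, and pushing a separating invariant $\eta\in S^{p^r}(M_1)^G$ through the multiplication map are all sound, and the identity $\phi(\mu_V(\eta))=\mu_W\bigl(S^{p^r}(\psi)(\eta)\bigr)=f^{p^r}$ is correctly justified by $\phi$ being an algebra homomorphism.

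The one point that needs care is exactly the one you flag yourself: the input that the separating invariant for $u$ may be taken of degree $p^r$. In this paper that ``sharp'' form of geometric reductivity is \emph{derived from} the very lemma you are proving (see the paragraph immediately after the theorem statement), so quoting it carelessly would be circular. Your appeal to Haboush's construction \cite{Haboush} is defensible (his invariant does have $p$-power degree, at least in the semisimple case), but for a general reductive group the cleanest non-circular justification, and the one that fits this paper best, is: plain geometric reductivity produces a homogeneous invariant of some degree $p^r d$ with $p\nmid d$ not vanishing at $u$, and the paper's main theorem (Theorem~\ref{MainTheorem}, whose proof is independent of the present lemma) upgrades it to degree $p^r$. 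With that substitution your proof stands, and it makes explicit that Mumford's lemma and the ``$p$-power separating invariant'' statement are each a short argument away from the other.
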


Now consider $v\in V^{G}\setminus\{0\}$. We define $W:=\kk v$ and write
$\kk[W]=\kk[x]$. The restriction
map $\phi: \kk[V]\rightarrow \kk[W]$, $f\mapsto f|_{W}$ is clearly surjective
and $G$-equivariant, and as $x\in\kk[W]^{G}$ the theorem implies the
existence of an invariant $f\in\kk[V]^{G}$ such that $f|_{W}=x^{p^{r}}$ for
some $r\ge 0$. It follows that for $h$ the degree $p^{r}$-component of $f$, we
 also have $h|_{W}=x^{p^{r}}$, and $h$ is a homogeneous invariant of degree
 $p^{r}$ satisfying $h(v)=h|_{W}(v)=x^{p^{r}}(v)=1\ne 0$. 

Note that although  this result implies that every nonzero fixed point $v$ can be separated from zero  by an invariant of $p$-power degree, it does not  imply that the
 minimal possible degree of an invariant separating $v$ from zero is also a
$p$-power. This is a consequence of Theorem \ref{MainTheorem} which we prove now. The proof is based on similar ideas to those used by Nagata and Miyata; indeed, it specialises to their proof in the case $r=0$.

\begin{proof}[Proof of Theorem \ref{MainTheorem}.]
We extend $v_{0}:=v$ to a basis $\{v_{0},v_{1},\ldots,v_{n}\}$ of $V$ and
form  the  corresponding dual basis $\{x_{0},x_{1},\ldots,x_{n}\}$ of
$V^{*}$. Then $f$ has the form $f=\sum_{i=0}^{p^{r}d}x_{0}^{p^{r}d-i}c_{i}$,
where $c_{i}\in \kk[x_{1},\ldots,x_{n}]_{i}$ and
$f(v_{0})=c_{0}\in\kk\setminus\{0\}$. Dividing by $c_{0}$, we may assume
$c_{0}=1$. We claim that
\[
\tilde{f}:=x_{0}^{p^{r}}+\frac{1}{d}\sum_{i=1}^{p^{r}}x_{0}^{p^{r}-i}c_{i}
\]
has the properties we require. Clearly, $\tilde{f}(v_{0})=1\ne 0$, and $\tilde{f}$ is
homogeneous of degree $p^{r}$. It remains to show
that $\tilde{f}$ is invariant. We will obtain $\tilde{f}$ as the image of $f$
under a $G$-equivariant map $\kk[V]_{p^{r}d}\rightarrow \kk[V]_{p^{r}}$. We have $V^{*}=\kk[x_{1},\ldots,x_{n}]_{1}\oplus\kk x_{0}$
as vector spaces. 
Since $v_{0}\in V^{G}$, we have that $\kk[x_{1},\ldots,x_{n}]_{1}$ is a $G$-submodule of
$V^{*}$. Consider the $G$-module $\kk[x_{0},\ldots,x_{n}]_{p^{r}d}$ and the subspace
\[
T:=\bigoplus_{i={p^{r}+1}}^{p^{r}d}\kk[x_{0}]_{p^{r}d-i}\otimes\kk[x_{1},\ldots,x_{n}]_{i},
\]
i.e. the set of polynomials of $\kk[x_{0},\ldots,x_{n}]_{p^{r}d}$ which have total
degree at least $p^{r}+1$ in the variables $x_{1},\ldots,x_{n}$. As $v_{0}\in
V^{G}$ we have, for any $g\in G$,
\[
g(x_{0})=x_{0}+\gamma(g) \quad\quad\text{ for some
}\quad\gamma(g)\in\kk[x_{1},\ldots,x_{n}]_{1}.
\]
It follows that $T$ is in fact a $G$-submodule of
$\kk[x_{0},\ldots,x_{n}]_{p^{r}d}$. We next show that the map
\[
\phi: \kk[x_{0},\ldots,x_{n}]_{p^{r}d}/T \mapsto
\kk[x_{0},\ldots,x_{n}]_{p^{r}}
\]
given by $\kk$-linear extension of
\[
\begin{array}{rcl}
x_{0}^{p^{r}d}+T&\mapsto& x_{0}^{p^{r}}\\
x_{0}^{p^{r}d-k}b_{k}+T&\mapsto&\frac{1}{d}x_{0}^{p^{r}-k}b_{k}\quad\text{ for
}b_{k}\in\kk[x_{1},\ldots,x_{n}]_{k}\,\,\text{ and }\,\,k=1,\ldots,p^{r}
\end{array}
\]
is an isomorphism of $G$-modules. Clearly $\phi$ is an isomorphism of
$\kk$-vector spaces, so it remains to show that $\phi$ is $G$-equivariant, i.e. $\phi(g(m+T))=g(\phi(m+T))$ for every $g\in G$ and
$m\in\kk[x_{0},\ldots,x_{n}]_{p^{r}d}$. By $\kk$-linearity, it is enough to
consider the cases $m=x_{0}^{p^{r}d}$ and $m=x_{0}^{p^{r}d-k}b_{k}$ for
$b_{k}\in\kk[x_{1},\ldots,x_{n}]_{k}$ and $k=1,\ldots,p^{r}$. Assume
$m=x_{0}^{p^{r}d}$ first. We fix $g\in G$, set $\gamma:=\gamma(g)$ and compute
\[
\phi(g(x_{0}^{p^{r}d}+T))=\phi((x_{0}+\gamma)^{p^{r}d}+T)=\phi((x_{0}^{p^{r}}+\gamma^{p^{r}})^{d}+T)\stackrel{(*)}{=}\]\[
\phi(x_{0}^{p^{r}d}+dx_{0}^{p^{r}(d-1)}\gamma^{p^{r}}+T)
=x_{0}^{p^{r}}+\gamma^{p^{r}}=(x_{0}+\gamma)^{p^{r}}=
\]\[g(x_{0}^{p^{r}})=g(\phi(x_{0}^{p^{r}d}+T)).
\]
Note that in (*) we have used that $x_{0}^{p^{r}(d-j)}\gamma^{p^{r}j}\in T$ for $j\ge 2$. 
Secondly assume $m=x_{0}^{p^{r}d-k}b_{k}$ with $1\le k\le p^{r}$ and
$b_{k}\in\kk[x_{1},\ldots,x_{n}]_{k}$. We write
$\widetilde{b_{k}}:=g(b_{k})\in\kk[x_{1},\ldots,x_{n}]_{k}$. In the following
computation we will use that ${p^{r}d-k\choose j}\equiv {p^{r}-k\choose j}\mod
p$ for $k=1,\ldots,p^{r}$ and $j=0,\ldots,p^{r}-k$, see
Lemma~\ref{charpbinomial} below. We obtain
\[
\phi(g(x_{0}^{p^{r}d-k}b_{k}+T))=\phi((x_{0}+\gamma)^{p^{r}d-k}\widetilde{b_{k}}+T)=\]\[\phi\left(\sum_{j=0}^{p^{r}d-k}{p^{r}d-k\choose
  j}x_{0}^{p^{r}d-k-j}\gamma^{j}\widetilde{b_{k}}+T\right).
\]
Note that $\gamma^{j}\widetilde{b_{k}}\in
\kk[x_{1},\ldots,x_{n}]_{j+k}$. In particular, for $j\ge p^{r}+1-k$, we have
$x_{0}^{p^{r}d-k-j}\gamma^{j}\widetilde{b_{k}}\in T$, so in the sum above only summands
for $j=0,\ldots,p^{r}-k$ have to be taken into account. Also note that $k\ge 1$, so each term of
$\gamma^{j}\widetilde{b_{k}}$ is of positive degree. Now by the definition
of $\phi$ we obtain
\[
\phi(g(x_{0}^{p^{r}d-k}b_{k}+T))=\phi\left(\sum_{j=0}^{p^{r}-k}{p^{r}d-k\choose
  j}x_{0}^{p^{r}d-k-j}\gamma^{j}\widetilde{b_{k}}+T\right)=\]\[\sum_{j=0}^{p^{r}-k}{p^{r}d-k\choose
  j}\frac{1}{d}x_{0}^{p^{r}-k-j}\gamma^{j}\widetilde{b_{k}}\stackrel{\textrm{ Lemma~\ref{charpbinomial}}}{=}\frac{1}{d}\sum_{j=0}^{p^{r}-k}{p^{r}-k\choose
  j}x_{0}^{p^{r}-k-j}\gamma^{j}\widetilde{b_{k}}=
\]
\[
\frac{1}{d}(x_{0}+\gamma)^{p^{r}-k}\widetilde{b_{k}}=g(\frac{1}{d}x_{0}^{p^{r}-k}b_{k})=g(\phi(x_{0}^{p^{r}d-k}b_{k}+T)).
\]
This shows that $\phi$ is indeed $G$-equivariant. Now let \[\pi:
\kk[x_{0},\ldots,x_{n}]_{p^{r}d}\rightarrow\kk[x_{0},\ldots,x_{n}]_{p^{r}d}/T\]
denote the canonical projection, which is $G$-equivariant as $T$ is a
$G$-submodule. Then $\phi\circ \pi:\kk[x_{0},\ldots,x_{n}]_{p^{r}d}\rightarrow
\kk[x_{0},\ldots,x_{n}]_{p^{r}}$ is a $G$-equivariant map, and hence it maps the
invariant $f$ to the invariant
\[
\phi(\pi(f))=\phi\left(\pi(\sum_{i=0}^{p^{r}d}x_{0}^{p^{r}d-i}c_{i})\right)=\phi\left(\sum_{i=0}^{p^{r}d}x_{0}^{p^{r}d-i}c_{i}+T\right).
\]
As for $i\ge p^{r}+1$ we have $x_{0}^{p^{r}d-i}c_{i}\in T$, only the summands where
$i=0,\ldots,p^{r}$ need to be considered, so we obtain
\[
\phi(\pi(f))=\phi\left(\sum_{i=0}^{p^{r}}x_{0}^{p^{r}d-i}c_{i}+T\right)=\phi(x_{0}^{p^{r}d}+T)+\sum_{i=1}^{p^{r}}\phi(x_{0}^{p^{r}d-i}c_{i}+T)=
\]
\[
x_{0}^{p^{r}}+\frac{1}{d}\sum_{i=1}^{p^{r}}x_{0}^{p^{r}-i}c_{i}=\tilde{f}.
\]
Hence, $\tilde{f}$ is $G$-invariant.
\end{proof}

We have used the following
characteristic $p$-relation on binomial coefficients:

\begin{Lemma}\label{charpbinomial}
Assume $p$ is a prime and $d\ge 1$. Then we have
\[
{p^{r}d-k\choose j}\equiv {p^{r}-k\choose j}\mod
p\quad\quad\text{ for } k=1,\ldots,p^{r}\,\text{  and } j=0,\ldots,p^{r}-k.
\]
\end{Lemma}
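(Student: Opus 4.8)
The plan is to prove the congruence $\binom{p^{r}d-k}{j}\equiv\binom{p^{r}-k}{j}\pmod p$ for $1\le k\le p^{r}$ and $0\le j\le p^{r}-k$ by invoking Lucas' theorem and comparing the base-$p$ digit expansions of $p^{r}d-k$ and $p^{r}-k$. First I would record the key observation that, since $1\le k\le p^{r}$, the number $p^{r}-k$ lies in the range $\{0,1,\ldots,p^{r}-1\}$, hence its base-$p$ expansion uses only the digit positions $0,1,\ldots,r-1$ (the $p^{r}$-digit and all higher digits are zero). Write $p^{r}-k=\sum_{i=0}^{r-1}a_{i}p^{i}$ with $0\le a_{i}<p$.

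Next I would compute the base-$p$ expansion of $p^{r}d-k$. Writing $p^{r}d-k=(d-1)p^{r}+(p^{r}-k)=(d-1)p^{r}+\sum_{i=0}^{r-1}a_{i}p^{i}$, and noting that there is no carrying between the low block $\sum_{i=0}^{r-1}a_{i}p^{i}<p^{r}$ and the high block $(d-1)p^{r}$, we see that the digits of $p^{r}d-k$ in positions $0,\ldots,r-1$ agree with those of $p^{r}-k$, namely $a_{0},\ldots,a_{r-1}$, while the digits in positions $\ge r$ are simply the digits of $d-1$ shifted up by $r$. The crucial point is then to control the digits of $j$: since $0\le j\le p^{r}-k\le p^{r}-1$, the base-$p$ expansion of $j$ also lives entirely in positions $0,\ldots,r-1$, say $j=\sum_{i=0}^{r-1}b_{i}p^{i}$ with $0\le b_{i}<p$.

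Now I would apply Lucas' theorem to both sides. It gives $\binom{p^{r}d-k}{j}\equiv\prod_{i\ge 0}\binom{(\text{digit }i\text{ of }p^{r}d-k)}{(\text{digit }i\text{ of }j)}\pmod p$, and similarly for $\binom{p^{r}-k}{j}$. For $i\le r-1$ the corresponding factors coincide since both numbers have digit $a_{i}$ there and $j$ has digit $b_{i}$. For $i\ge r$, the digit of $j$ is $0$, so the factor $\binom{*}{0}=1$ contributes nothing to either product (for $p^{r}-k$ the digit is also $0$, for $p^{r}d-k$ it is whatever digit of $d-1$, but in both cases $\binom{\cdot}{0}=1$). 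Hence the two products are equal modulo $p$, which is exactly the claim.

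I expect the only mild subtlety — the "main obstacle," though it is minor — to be the bookkeeping that guarantees no carries occur when splitting $p^{r}d-k$ into the blocks $(d-1)p^{r}$ and $(p^{r}-k)$; this is immediate from $0\le p^{r}-k<p^{r}$ but should be stated cleanly so that Lucas' theorem can be applied to the concatenated digit string. As an alternative to citing Lucas, one can give a generating-function proof: in $\FF_{p}[x]$ we have $(1+x)^{p^{r}d-k}=(1+x)^{p^{r}(d-1)}(1+x)^{p^{r}-k}=\bigl((1+x)^{p^{r}}\bigr)^{d-1}(1+x)^{p^{r}-k}=(1+x^{p^{r}})^{d-1}(1+x)^{p^{r}-k}$, and for $j\le p^{r}-1$ the coefficient of $x^{j}$ only sees the factor $(1+x)^{p^{r}-k}$ (since every nonconstant term of $(1+x^{p^{r}})^{d-1}$ has degree $\ge p^{r}$), giving $\binom{p^{r}d-k}{j}\equiv\binom{p^{r}-k}{j}\pmod p$ directly. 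I would likely present this generating-function argument as the actual proof, as it is self-contained and avoids invoking Lucas' theorem, with the digit-expansion discussion relegated to a remark.
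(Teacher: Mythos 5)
Your proposal is correct, and in fact contains two complete arguments. The first (Lucas' theorem plus the digit comparison coming from $p^{r}d-k=(d-1)p^{r}+(p^{r}-k)$ with no carries, and the observation that $j\le p^{r}-k<p^{r}$ forces all digits of $j$ above position $r-1$ to vanish) is essentially identical to the paper's proof, down to the same decomposition of $p^{r}d-k$. The generating-function argument you say you would actually present is a genuinely different and arguably cleaner route: working in $\FF_{p}[x]$, the identity $(1+x)^{p^{r}d-k}=(1+x^{p^{r}})^{d-1}(1+x)^{p^{r}-k}$ together with the fact that every nonconstant term of $(1+x^{p^{r}})^{d-1}$ has degree at least $p^{r}$ immediately gives equality of the coefficients of $x^{j}$ for $j<p^{r}$, which is the claim. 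This version is self-contained (it only uses the freshman's dream, which is the same elementary fact underlying Lucas' theorem), avoids citing an external result, and avoids the digit bookkeeping entirely; the paper's version has the advantage of making the digit structure explicit, which is perhaps more transparent if one already has Lucas' theorem in hand. Both are valid; the only hypotheses used in either are $k\ge 1$ (so that $p^{r}-k<p^{r}$) and $j\le p^{r}-k$, exactly as in the statement.
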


\begin{proof}
We first recall the well known Theorem of Lucas on binomial coefficients modulo a
prime (see \cite{FineLucas} for a short proof):  if $a, b$ are integers with $p$-adic expansions
$a=\sum_{i=0}^{\infty}a_{i}p^{i}$ and
$b=\sum_{i=0}^{\infty}b_{i}p^{i}$, then \[{a \choose b}\equiv \prod_{i=0}^{\infty}{a_i\choose b_i}
\mod p.
\]
Of course, here almost all summands are zero and almost all factors are equal
to $1$, as ${m\choose 0}=1$ for all $m\ge 0$.
We now consider the base-$p$-expansions $j=\sum_{i=0}^{\infty}j_{i}p^{i}$,
$p^{r}d-k=\sum_{i=0}^{\infty}a_{i}p^{i}$ and
$p^{r}-k=\sum_{i=0}^{\infty}b_{i}p^{i}$, where all $j_{i},a_{i},b_{i}$ are
zero for large enough $i$, and $0\le j_{i},a_{i},b_{i}<p$ for all $i$. As $k\ge
1$ and $j\le p^{r}-k$, we have that $j_{i}=0$ for $i\ge r$. As
$p^{r}d-k=p^{r}-k+(d-1)p^{r}$, it follows that $a_{i}=b_{i}$ for $0\le i<
r$. We thus have by Lucas' Theorem
\[
{p^{r}d-k\choose j}\equiv\prod_{i=0}^{\infty}{a_{i}\choose
  j_{i}}\equiv\prod_{i=0}^{r-1}{a_{i}\choose
  j_{i}}\equiv\prod_{i=0}^{r-1}{b_{i}\choose
  j_{i}}\equiv\prod_{i=0}^{\infty}{b_{i}\choose j_{i}}\equiv{p^{r}-k\choose
  j}\mod p.
\]
\end{proof}

\section{An example}

Corollary \ref{CorMainTheorem} sometimes allows a determination of $\delta(G,V)$
for a given representation $V$ without knowledge of the invariant ring. The special case $p=2$ of the following example was
treated in \cite[Proposition 12]{KohlsSezerDegRed}.

\begin{eg}
Consider a field of positive characteristic $p$, the cyclic group $Z_{p}$ of order $p$, and the action of the group $G=Z_{p}\times Z_{p}=\langle g_{1},g_{2}\rangle$ on a
$G$-module $V=\langle h_{1},\ldots,h_{m},e_{1},\ldots,e_{m}\rangle$, $m\ge 2$, where
$g_{1}$ acts by the matrix
$\left(\begin{array}{cc}I_{m}&0\\I_{m}&I_{m}\end{array}\right)$, and $g_{2}$
acts by the matrix
$\left(\begin{array}{cc}I_{m}&0\\J_{m}(\lambda)&I_{m}\end{array}\right)$. Here,
$I_{m}$ denotes the $m\times m$ identity matrix, and $J_{m}(\lambda)$ a lower
triangular $m\times m$ Jordan block with eigenvalue $\lambda\in \kk$. Then
$e_{m}\in V^{G}$, and we want to show that $\epsilon(G,e_{m})=p^{2}$. Note that, since for a finite group we have $\delta(G,V) \leq |G|$ (see \cite[Theorem~1.1]{ElmerKohlsSigmaDelta}), this shows that $\delta(G,V)= p^2$. 

We write
$\kk[V]=\kk[x_{1},\ldots,x_{m},y_{1},\ldots,y_{m}]$. We then have
\[
\begin{array}{rcll}
g_{i}(x_{j})&=&x_{j} &\text{ for }i=1,2,\,\,j=1,\ldots,m\\
g_{1}(y_{j})&=&y_{j}-x_{j}&\text{ for } j=1,\ldots,m\\
g_{2}(y_{1})&=&y_{1}-x_{1}\\
g_{2}(y_{j})&=&y_{j}-\lambda x_{j}- x_{j-1}&\text{ for } j=2,\ldots,m.
\end{array}
\]
It is easy to see that
$\kk[V]^{G}_{1}=\langle x_{1},\ldots,x_{m}\rangle$, which shows
$\epsilon(G,e_{m})>1$. As $\epsilon(G,e_{m})$ is a $p$-power by Theorem
\ref{MainTheorem}, and bounded above by $|G|=p^{2}$, it suffices to show that
$\epsilon(G,e_{m})\ne p$. To this end, we will demonstrate that $y_{m}^{p}$ does
not appear in any invariant polynomial. Define 
\[\Delta_{i,j}:\kk[V]\rightarrow
\kk[V],\quad  f\mapsto g_{1}^{i}g_{2}^{j}(f)-f\quad\quad\text{ for
}i,j\in\Z.\]
 Then for an invariant polynomial $f$, $\Delta_{i,j}(f)=0$ for all $i,j$. We will say that a
monomial $r$ \emph{lies over} a monomial $s$ with respect to $\Delta_{i,j}$ 
if $s$ appears in $\Delta_{i,j}(r)$. As $g_{1}^{i}g_{2}^{j}$ acts by the
matrix
$\left(\begin{array}{cc}I_{m}&-iI_{m}-{j}J_{m}(\lambda)^{T}\\0&I_{m}\end{array}\right)$
on $V^{*}$, it follows that  if a monomial $r$
lies over $x_{m}^{p}$ with respect to $\Delta_{i,j}$ for some $i,j$, then
$r$ is an element of the set $M:=\{y_{m}^{p},x_{m}y_{m}^{p-1},x_{m}^{2}y_{m}^{p-2},\ldots,x_{m}^{p-1}y_{m}\}$. Let
now $f\in\kk[V]^{G}$ be an invariant. Let
$h=\sum_{k=1}^{p}c_{k}y_{m}^{k}x_{m}^{p-k}$, $c_{k}\in\kk$, be the partial sum of terms of $f$
with monomials from $M$.
Then for all $i,j$, the coefficients of $x_{m}^{p}$
in $\Delta_{i,j}(f)=0$ and $\Delta_{i,j}(h)$ respectively are equal. From
\begin{eqnarray*}
\Delta_{-i,-1}(h)&=&\Delta_{-i,-1}\left(\sum_{k=1}^{p}c_{k}y_{m}^{k}x_{m}^{p-k}\right)\\
&=&\sum_{k=1}^{p}(c_{k}(y_{m}+(\lambda+i)x_{m}+x_{m-1})^{k}x_{m}^{p-k}-c_{k}y_{m}^{k}x_{m}^{p-k})\\&=&\ldots+\sum_{k=1}^{p}c_{k}(\lambda+i)^{k}x_{m}^{p}+\ldots
\end{eqnarray*}
it follows that $\sum_{k=1}^{p}c_{k}(\lambda+i)^{k}=0$ for
$i=0,\ldots,p-1$. Therefore all elements of the set
$Z:=\{\lambda,\lambda+1,\ldots,\lambda+p-1\}$ of size $p$ are roots of the polynomial $q:=\sum_{k=1}^{p}c_{k}X^{k}$.
Clearly, $0$ is also a root of $q$. Assume first $0\not\in Z$. Then the
polynomial $q$ of degree $\le p$ has the elements of $\{0\}\cup Z$ as $p+1$
different roots, i.e. $q=0$. In particular, $c_{p}=0$, which shows that
$y_{m}^{p}$ does not appear in $f$, which we wanted to prove and we are
done. Secondly assume  $0\in Z$. It follows $\lambda+i_{0}=0$ for some
$i_{0}\in\{0,\ldots,p-1\}$, which implies $Z=\{0,1,2,\ldots,p-1\}$. As $Z$ is also the
set of roots of $X^{p}-X$, it follows $q=c(X^{p}-X)$ for some $c\in\kk$,
i.e. $c_{p}=c$, $c_{1}=-c$, and the other $c_{i}$'s are zero. Therefore we have $h=c(y_{m}^{p}-x_{m}^{p-1}y_{m})$. As $i_{0}+\lambda=0$, $g_{1}^{-i_{0}}g_{2}^{-1}$ acts by the matrix $\left(\begin{array}{cc}I_{m}&i_{0}I_{m}+J_{m}(\lambda)^{T}\\0&I_{m}\end{array}\right)=\left(\begin{array}{cc}I_{m}&J_{m}(0)^{T}\\0&I_{m}\end{array}\right)$
on $V^{*}$. From this it can be seen that $x_{m}^{p-1}y_{m}$ is the only monomial that lies over
$x_{m}^{p-1}x_{m-1}$ with respect to $\Delta_{-i_{0},-1}$. Therefore, the coefficients
of $x_{m}^{p-1}x_{m-1}$ in $\Delta_{-i_{0},-1}(f)=0$ and
\[
\Delta_{-i_{0},-1}(-cx_{m}^{p-1}y_{m})=-cx_{m}^{p-1}(y_{m}+x_{m-1})+cx_{m}^{p-1}y_{m}=-cx_{m}^{p-1}x_{m-1}
\]
are equal, hence $0=c=c_{p}$, which shows that $y_{m}^{p}$ does not appear in $f$
 as claimed.
\end{eg}

\begin{rem}
In the above, it was easy to see that  $p^{2}\ge\epsilon(G,e_m) > 1$, and we
showed $\epsilon(G,e_m) \neq p$. Theorem \ref{MainTheorem} allowed us to
conclude that $\epsilon(G,e_m) =p^2$. If $p=2$ this follows straight away from
Nagata and Miyata's result, but if $p>2$ it is hard to rule out the
possibility that $\epsilon(G,e_m) = dp$ for some $1<d<p$ without using  our theorem.
\end{rem}

\begin{ack}
This paper was prepared during a visit of the first author to TU M\"unchen. We want to thank Gregor Kemper for making this visit possible.
\end{ack}

\bibliographystyle{plain}
\bibliography{MyBib}

\end{document}